\documentclass{amsart}
\usepackage{amsmath,amssymb,graphicx,amsthm,bm,hyperref,mathrsfs,tikz}
\usepackage{graphicx}
\usepackage{hyperref}
\hypersetup{colorlinks=true,citecolor=blue,linkcolor=cyan}
\vfuzz2pt 
\hfuzz2pt 
\newtheorem{thm}{Theorem}[section]
\newtheorem{cor}[thm]{Corollary}
\newtheorem{lem}[thm]{Lemma}
\newtheorem{prop}[thm]{Proposition}
\theoremstyle{definition}

\theoremstyle{remark}

\theoremstyle{question}

\numberwithin{equation}{section}

\newcommand{\Ff}{\mathbb{F}}
\newcommand{\Z}{\mathbb{Z}}

\title[On Incidences of $\varphi$ and $\sigma$ in the Function Field Setting]{On Incidences of $\varphi$ and $\sigma$ in the Function Field Setting}
\author{Patrick Meisner}
\address{School of Mathematical Science, Tel Aviv University, Ramat Aviv, Tel Aviv, 6997801, Israel }
\email{pfmeisner@gmail.com}

\begin{document}

\begin{abstract}

Erd\H{o}s first conjectured that infinitely often we have $\varphi(n) = \sigma(m)$, where $\varphi$ is the Euler totient function and $\sigma$ is the sum of divisor function. This was proven true by Ford, Luca and Pomerance in 2010. We ask the analogous question of whether infinitely often we have $\varphi(F) = \sigma(G)$ where $F$ and $G$ are polynomials over some finite field $\Ff_q$. We find that when $q\not=2$ or $3$, then this can only trivially happen when $F=G=1$. Moreover, we give a complete characterisation of the solutions in the case $q=2$ or $3$. In particular, we show that $\varphi(F) = \sigma(G)$ infinitely often when $q=2$ or $3$.

\end{abstract}

\maketitle

\section{Introduction}\label{Intro}

\subsection{Background}

Erd\H{o}s first conjectured in \cite{Erd} that there should be infinitely many solutions to the equation $\varphi(n)=\sigma(m)$ where $\varphi$ is the Euler totient function and $\sigma$ is the sum of divisor function. This question is interesting in part because it is implied by the infinitude of two set sets of primes both of which are widely believed to be infinite: twin primes and Mersenne primes. Indeed, if we have a prime $p$ such that $p+2$ is also prime then
$$\sigma(p)= p+1 =\varphi(p+2),$$
while if we have a Mersenne prime $2^n-1$, then
$$\sigma(2^n-1) = 2^n  = \varphi(2^{n+1}).$$

This conjecture was proved by Ford, Luca and Pomerance in \cite{FLP}. Moreover, they showed that for some $\alpha>0$, there are at least $\exp((\log\log x)^\alpha)$ common values $\leq x$ of $\varphi$ and $\sigma$ for large $x$. Under a uniform version of the prime $k$-tuples conjecture, Ford and Pollack \cite{FP} were able to show that the number of common values less than $x$ of $\varphi$ and $\sigma$ is $\geq \frac{x}{(\log x)^{1+o(1)}}.$

\subsection{Function Fields}

In this paper, we are interested in the analogous question about function fields. That is if $F,G\in\Ff_q[T]$ are polynomials over the finite field $\Ff_q$, then we define
\begin{align}\label{phi}
\varphi(F) = \# (\Ff_q[T]/(F))^* = \prod_{P|F} |P|^{v_P(F)-1}(|P|-1)
\end{align}
\begin{align}\label{sigma}
\sigma(G) = \sum_{D|G} |D|
\end{align}
where for any polynomial $A\in\Ff_q[T]$, $|A| = q^{\deg(A)}$. Further, unless otherwise stated, when we consider ranging over divisors of a polynomial, we always consider only \textit{monic} divisors. Therefore, the $P$ and $D$ appearing in the definition of $\varphi$ and $\sigma$ are monic.

One thing of note is that in the function field setting, the twin prime conjecture was proved by Bender and Pollack \cite{BP} in the large $q$ limit (in fact, they just need $q$ to grow sufficiently faster than $n$). Following this, Bary-Soroker \cite{BS} proved the full Hardy-Littlewood prime $k$-tuple conjecture, in the large $q$ limit. However, even with this big hammer it doesn't seem to help us prove the infinitude of solutions to $\varphi(F)=\sigma(G)$. Indeed, if we had a prime polynomial $P$ such that $P+2$ was also prime, then
\begin{align*}
\sigma(P) & = |P|+1 = q^{\deg(P)}+1 \\
& \not= q^{\deg(P)}-1 = q^{\deg(P+2)}-1 \\
& = |P+2|-1 = \varphi(P+2).
\end{align*}

The philosophy of the connection between the integers and function fields is that a true statement in one setting should have analogous true statement in the other. While the functions defined in \eqref{phi} and \eqref{sigma} are the standard analogues in the function field setting we find that the analogous statements are almost never true.

\begin{thm}\label{Thm1}

If $q=2$ or $3$ then there are infinitely many solutions to $\varphi(F)=\sigma(G)$ with $F,G\in\Ff_q[T]$ while if $q\not=2$ or $3$, the only solution is the trivial solution $F=G=1$.

\end{thm}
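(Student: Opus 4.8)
The plan is to treat the two halves of the statement separately, since the claim $q\in\{2,3\}$ is constructive while the claim $q\notin\{2,3\}$ is a rigidity statement. For the constructions, observe that if $P$ is monic irreducible of degree $d$ then by \eqref{phi} $\varphi(P)=|P|-1=q^d-1$, while $\sigma(T^{d-1})=\sum_{k=0}^{d-1}q^k=\tfrac{q^d-1}{q-1}$. When $q=2$ these agree, so $\varphi(P)=\sigma(T^{d-1})$ for every $d\ge 1$, and since $\Ff_2[T]$ has irreducibles of every degree this already gives infinitely many solutions. When $q=3$, take instead $F=P\cdot L_1$ with $P$ monic irreducible of degree $d$ and $L_1$ a monic linear with $L_1\ne P$, and $G=T^{d-1}\cdot L_2$ with $L_2$ monic linear, $L_2\ne T$; multiplicativity gives $\varphi(F)=(q^d-1)(q-1)=2(3^d-1)$ and $\sigma(G)=\tfrac{3^d-1}{2}\cdot(3+1)=2(3^d-1)$, again infinitely many solutions. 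The relevant coincidences are $q-1=1$ for $q=2$ and $(q-1)^2=q+1$ for $q=3$, which is exactly why these tricks do not persist to larger $q$.

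For the rigidity direction, suppose $q\notin\{2,3\}$ and $\varphi(F)=\sigma(G)$; we may take $F,G$ monic since $\varphi$ and $\sigma$ depend only on the ideal generated. The first reduction is that $\sigma(G)=\sum_{D\mid G}|D|\equiv 1\pmod q$, because every monic divisor $D\ne 1$ has $q\mid |D|$; hence $\gcd(\varphi(F),q)=1$, and by \eqref{phi} this forces $F$ to be squarefree. Writing $F=P_1\cdots P_r$ with $d_i=\deg P_i$ gives $\varphi(F)=\prod_{i=1}^r(q^{d_i}-1)$, and reducing mod $q$ once more, $\prod_i(q^{d_i}-1)\equiv(-1)^r\equiv 1\pmod q$, so $r$ is even (here we use $q\ne 2$); thus $F\ne 1$ forces $r\ge 2$. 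If $G=1$ then $\varphi(F)=1$ forces $F=1$, and if $F=1$ then $\sigma(G)=1$ forces $G=1$, so from now on assume $F,G\ne 1$; note also $|G|<\sigma(G)=\varphi(F)<|F|$, i.e.\ $\deg G<\deg F$.

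The core of the argument compares the two sides prime by prime. Expanding $\sigma(G)=\sum_{t\ge 0}\delta_t q^t$, where $\delta_t$ counts monic divisors of $G$ of degree $t$, and $\varphi(F)=\prod_i(1-q^{d_i})$ (using $r$ even), matching coefficients of successive powers of $q$ yields a chain of congruences mod $q,q^2,q^3,\dots$ linking the number of small-degree prime factors of $F$ to $\delta_1,\delta_2,\dots$; for instance mod $q^2$ one gets $\#\{i:d_i=1\}+\delta_1\equiv 0\pmod q$, where $\delta_1$ is the number of distinct linear factors of $G$. Since $F$ is squarefree it has at most $q$ linear factors, at most $\binom{q}{2}$ quadratic factors, and so on, so these congruences pin down the small-prime content of both $F$ and $G$ within a bounded range. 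The complementary input is a Zsygmondy-type analysis: for every $n\ge 3$ (and every $n\le 2$ outside a short explicit list) $q^n-1$ has a primitive prime divisor $\ell_n$, and $\ell_n\mid\varphi(F)$ iff $F$ has a prime factor of degree divisible by $n$, while $\ell_n\mid\sigma(G)$ forces $G$ to have a prime factor $Q$ with $n\mid(v_Q(G)+1)\deg Q$ but $n\nmid\deg Q$; applied to the largest degree occurring in $F$ this rigidly constrains the high-degree structure of $G$, and comparing this with the size inequality $|G|<\varphi(F)<|F|$ (and the squarefree congruence data) forces $F=G=1$. I expect the main obstacle to be the bookkeeping in making the primitive-divisor correspondence genuinely rigid: controlling $\ell_n$-adic valuations of the relevant $\Phi_n(q)$, correctly handling the finitely many Zsygmondy exceptions (which for $q=2,3$ are precisely what allows solutions, e.g.\ $q+1$ a power of $2$ when $q=3$), and clearing the finitely many residual small-$q$, small-degree configurations by direct computation.
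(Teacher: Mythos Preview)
Your constructions for $q=2,3$ are correct and in fact slightly cleaner than the paper's: for $q=2$ you take $(F,G)=(P,T^{d-1})$ and for $q=3$ you take $(F,G)=(PL_1,T^{d-1}L_2)$, whereas the paper builds larger telescoping families $V_q(\mathbf{v})$ (your examples are essentially the minimal members of those families). So the existence half is fine.

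The rigidity half, however, is not a proof but a plan, and it is missing the one clean idea that makes the argument close. You correctly establish that $F$ is squarefree with an even number of prime factors, and you correctly identify Zsigmondy as the relevant tool. But from there you propose a mixture of congruences mod $q^k$, a size bound $\deg G<\deg F$, and primitive-prime-divisor analysis of the top degree, openly conceding that the bookkeeping is the ``main obstacle.'' None of that bookkeeping is actually needed, and as written your sketch does not reach a contradiction: knowing that $\ell_d\mid\sigma(G)$ for the largest $d$ only gives $d\mid (v_Q{+}1)\deg Q$ for some $Q\mid G$, which combined with $\deg G<\deg F$ does not obviously force anything.

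The decisive step you are missing is this. Cross-multiply to write the equation as
\[
\prod_{P\mid F}(q^{\deg P}-1)\ \prod_{Q\mid G}(q^{\deg Q}-1)\;=\;\prod_{Q\mid G}\bigl(q^{(v_Q+1)\deg Q}-1\bigr),
\]
and then use Zsigmondy \emph{globally} rather than only at the top: for $q\neq 2,3$, a product $\prod_i(q^{n_i}-1)$ determines the multiset $\{n_i\}$ uniquely (peel off the largest exponent using its primitive prime divisor, divide out, and repeat; the Zsigmondy exceptions occur precisely at $q=2,3$). Hence
\[
\{\deg P:P\mid F\}\ \cup\ \{\deg Q:Q\mid G\}\;=\;\{(v_Q{+}1)\deg Q:Q\mid G\}
\]
as multisets. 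Now simply compare cardinalities: the right-hand side has exactly as many elements as there are primes dividing $G$, so the left-hand side forces $\{\deg P:P\mid F\}=\emptyset$, i.e.\ $F=1$, whence $G=1$. The congruences mod $q^k$ and the inequality $|G|<|F|$ play no role.
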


This is a sharp contrast to the integer setting. Not only do we not get infinitely many solutions, for most $q$ we do not get even one coincidental non-trivial one. A key ingredient for proving Theorem \ref{Thm1} for $q\not=2,3$ is a result of Zsigmondy \cite{Zsig} on primitive prime divisors of the sequence $\{a^n-b^n\}$ (see Section \ref{keypropsec} for more on this).

The proof to Theorem \ref{Thm1} for $q=2,3$ can be done by construction. For every tuple of positive integers $\mathbf{v} = (v_0,v_1,\dots,v_n)$, define
\begin{align}
V_q(\mathbf{v}) = \{(F,G)\in\Ff_q[T] :  G = \prod_{i=1}^n P_i^{v_i}, F=P_{n+1}, \deg(P_k) = v_0\prod_{i=1}^{k-1} (v_i+1) \}.
\end{align}

\begin{lem}\label{Mainlem}

If $(F,G)\in V_2(\mathbf{v})$ such that $v_0=1$ then $\varphi(F)=\sigma(G)$. While if $(F,G)\in V_3(\mathbf{v})$ with $v_0=2$, then $\varphi(TF) = \sigma(T(T+1)G)$.

\end{lem}

Clearly, the sets described in Lemma \ref{Mainlem} are infinite. Therefore, this lemma implies Theorem \ref{Thm1} for $q=2,3$. Moreover, the sets $V_q(\mathbf{v})$ together with some finite, exceptional sets, generate all the solutions to $\varphi(F)=\sigma(G)$.

\begin{thm}\label{MainThm2}

If $q=2$ or $3$ and $\varphi(F) = \sigma(G)$, then there exists a finite set of tuples of polynomials $E_q$ such that $F=\prod_{i=0}^n F_i $, $G=\prod_{i=0}^n G_j$ with $\gcd(F_i,F_j)=\gcd(G_i,G_j)=1$, $i\not=j$, $(F_0,G_0)\in E_q$ and $(F_i,G_i) \in V_q(\mathbf{v}_i)$ for some $\mathbf{v}_i$ such that $v_{i,0}| 6$ if $q=2$ or $v_{i,0}|2$ if $q=3$.

\end{thm}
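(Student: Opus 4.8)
The plan is to strip the identity $\varphi(F)=\sigma(G)$ apart one prime factor of $F$ at a time, matching each with a descending tower of prime powers of $G$, until only a bounded ``core'' is left. I would begin by showing $F$ is squarefree. Each factor $\sigma(P^v)=1+|P|+\cdots+|P|^v$ is $\equiv1\pmod q$ since $|P|=q^{\deg P}$ is a positive power of $q$, so $\gcd(\sigma(G),q)=1$; on the other hand $\varphi(F)=\prod_{P\mid F}|P|^{v_P(F)-1}(|P|-1)$ has $q$-adic valuation $\sum_{P\mid F}(v_P(F)-1)\deg P$, which vanishes only when $v_P(F)=1$ for every $P\mid F$. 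Hence $\varphi(F)=\prod_{P\mid F}(q^{\deg P}-1)$, and writing $G=\prod_iP_i^{v_i}$ with $d_i=\deg P_i$ and clearing denominators, the equation becomes
\[
\prod_{P\mid F}\bigl(q^{\deg P}-1\bigr)\cdot\prod_i\bigl(q^{d_i}-1\bigr)=\prod_i\bigl(q^{(v_i+1)d_i}-1\bigr).
\]
I would also record the crude estimate $q^{\deg G}\le\sigma(G)=\varphi(F)<q^{\deg F}$ (for $F\neq1$), so $\deg F>\deg G$, and observe that this gap is conserved by the peeling move below.

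Next I would pass to cyclotomic pieces via $q^m-1=\prod_{a\mid m}\Phi_a(q)$, with $\Phi_a$ the $a$-th cyclotomic polynomial, turning both sides above into products of the integers $\Phi_a(q)$. Comparing, the multiset of indices occurring on the left — every divisor $a$ of some $\deg P$ ($P\mid F$) or of some $d_i$, counted with multiplicity — must equal that on the right, namely every divisor $a$ of some $(v_i+1)d_i$. By Zsigmondy's theorem there is a finite set $Z_q$ so that for $a\notin Z_q$ the integer $\Phi_a(q)$ has a primitive prime divisor $\ell_a$, whose multiplicative order mod $q$ is exactly $a$, hence $\ell_a\nmid\Phi_b(q)$ for $b\neq a$; reading off the exponent of $\ell_a$ on the two sides forces the index multisets to agree outside $Z_q$. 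For $q=2$ the only defective indices are $a\in\{1,6\}$ (note $\Phi_6(2)=3=\Phi_2(2)$), and for $q=3$ only $a=2$ (note $\Phi_2(3)=4$); it is exactly these that produce the divisibility restrictions $v_{i,0}\mid6$, resp.\ $v_{i,0}\mid2$, on the bottom degrees of the towers.

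The combinatorial heart is then an induction on $\deg F+\deg G$. Take a prime $Q\mid F$ of maximal degree $e$; if $e\notin Z_q$, its primitive prime $\ell_e$ divides $\sigma(G)$, hence divides $\sigma(P^v)$ for a unique $P^v\,\|\,G$, forcing $e\mid(v+1)d$ with $d=\deg P$. Using maximality of $e$, the preserved inequality $\deg F>\deg G$, and the index matching above (tracking the largest index present on each side), one upgrades this to $e=(v+1)d$, whence $(q^e-1)/\sigma(P^v)=q^{d}-1$. Replacing the pair $(Q,P^v)$ by one ``virtual'' prime of degree $d=e/(v+1)$ in $F$ and deleting $P^v$ from $G$ produces a strictly smaller instance $\varphi(F')=\sigma(G')$ with the same degree gap. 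Iterating the moves originating from a fixed $Q$ yields a strictly decreasing chain of degrees $e=e_0>e_1>\cdots$ with $e_{k-1}/e_k=v_{i,k}+1$, i.e.\ precisely the shape $\deg P_k=v_{i,0}\prod_{j<k}(v_{i,j}+1)$ from the definition of $V_q(\mathbf{v}_i)$; the chain halts only when the running degree enters $Z_q$ (otherwise its primitive prime would be uncancellable), and that terminal value is $v_{i,0}$. Thus each prime factor of $F$ spawns one factor $(F_i,G_i)\in V_q(\mathbf{v}_i)$ on disjoint supports, and whatever survives after removing all towers has every prime degree in $Z_q$, hence bounded degree; the finitely many resulting residues form $E_q$ and give $(F_0,G_0)$, yielding the claimed coprime factorisations $F=\prod_{i\ge0}F_i$, $G=\prod_{i\ge0}G_i$.

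The main obstacle is the well-definedness of the peeling step — the upgrade from $e\mid(v+1)d$ to $e=(v+1)d$, and the termination with only finitely many residues. The subtlety is that the $\Phi_a(q)$ are only \emph{almost} pairwise coprime: a prime can be the primitive prime of index $a$ while simultaneously being the ``extra'' prime of index $ap$ (as $3$ is for the indices $2$ and $6$ when $q=2$), so the multiplicity bookkeeping near $Z_q$ and the control of these bounded shared primes must be handled carefully; and pinning down $E_q$ explicitly, if the sharpest form is wanted, ultimately reduces to a finite but somewhat delicate case analysis driven by the known Zsigmondy exceptions for $q=2$ and $q=3$.
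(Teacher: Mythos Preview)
Your approach is essentially the paper's: show $F$ is squarefree, reduce $\varphi(F)=\sigma(G)$ to an identity among products of terms $q^{n}-1$, invoke Zsigmondy to match the multisets of exponents outside a small exceptional set (divisors of $6$ for $q=2$, of $2$ for $q=3$), and then chain each prime of $F$ with non-exceptional degree down through primes of $G$ until the running degree lands in the exceptional set. The paper isolates the Zsigmondy step as a standalone result (Proposition~\ref{keyprop}) giving the multiset equality $\{n_i:n_i\nmid d_q\}=\{m_j:m_j\nmid d_q\}$ directly for any identity $\prod_i(q^{n_i}-1)=\prod_j(q^{m_j}-1)$, after which the tower-building is pure combinatorics on multisets; this sidesteps both your cyclotomic decomposition into $\Phi_a(q)$ (and the shared-prime bookkeeping you rightly flag as the main obstacle) and the invariant $\deg F>\deg G$, neither of which the argument actually needs---the upgrade from $e\mid(v+1)d$ to $e=(v+1)d$ follows immediately from comparing the maximal element on each side of the multiset equality, since $(v_i+1)d_i>d_i$ forces that maximum to come from the $F$-side.
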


In Section \ref{exceptsetsect} we discuss the exceptional sets and the possible values of $n$ and the $\mathbf{v}$'s. We get the following corollary.

\begin{cor}\label{MainThm2cor}

With the same notation as in Theorem \ref{MainThm2}, if $q=3$ then we must have $n\leq 2$. Moreover, all possible values of $\mathbf{v}_1,\mathbf{v}_2$ such that $v_{i,0}|2$ are possible.

If $q=2$, we must have $n\leq 3$. Moreover all possible values of $\mathbf{v}_1,\mathbf{v}_2,\mathbf{v}_3$ such that $v_{i,0}|6$, $i=1,2$ and $v_{3,0}=1$ are possible except for $(2,2)$ and $(1,1,1)$.

\end{cor}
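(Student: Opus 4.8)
The plan is to convert the multiplicative identity $\varphi(F)=\sigma(G)$ into a single finite Diophantine constraint on the exceptional block, and then settle both assertions — the bound on $n$ and the list of admissible profiles — by a finite enumeration driven by the explicit description of $E_q$ from Section \ref{exceptsetsect} together with a count of low-degree irreducibles. First I would record the exact behaviour of $\varphi$ and $\sigma$ on a single block $(F_i,G_i)\in V_q(\mathbf v_i)$. Writing $\deg(P_k)=v_{i,0}\prod_{j<k}(v_{i,j}+1)$, one has $\deg(P_k)(v_{i,k}+1)=\deg(P_{k+1})$, so
\begin{equation*}
\sigma(G_i)=\prod_{k}\frac{q^{\deg(P_k)(v_{i,k}+1)}-1}{q^{\deg(P_k)}-1}=\frac{q^{\deg(F_i)}-1}{q^{v_{i,0}}-1},\qquad \varphi(F_i)=q^{\deg(F_i)}-1,
\end{equation*}
the product telescoping because the numerator of the $k$-th term equals the denominator of the $(k+1)$-st. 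Hence $\varphi(F_i)=(q^{v_{i,0}}-1)\,\sigma(G_i)$ for every $i\ge 1$.

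Using multiplicativity of $\varphi$ and $\sigma$ across the pairwise-coprime factors of Theorem \ref{MainThm2}, the relation $\varphi(F)=\sigma(G)$ reads $\varphi(F_0)\prod_{i\ge1}(q^{v_{i,0}}-1)\sigma(G_i)=\sigma(G_0)\prod_{i\ge1}\sigma(G_i)$. Cancelling the strictly positive common factor $\prod_{i\ge1}\sigma(G_i)$ leaves the reduced equation
\begin{equation*}
\sigma(G_0)=\varphi(F_0)\prod_{i=1}^{n}(q^{v_{i,0}}-1). \tag{$\star$}
\end{equation*}
Since $E_q$ is finite and explicitly determined, $\sigma(G_0)$ and $\varphi(F_0)$ take only finitely many values, so $(\star)$ forces $\prod_{i=1}^n(q^{v_{i,0}}-1)$ to be one of finitely many integers $r$. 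The task is now combinatorial: determine which multisets of the admissible factors — $q^{v_{i,0}}-1\in\{2,8\}$ for $q=3$ (where $v_{i,0}\mid 2$), and $\in\{1,3,7,63\}$ for $q=2$ (where $v_{i,0}\mid 6$) — multiply to an admissible $r$, and which of these are realised by genuine polynomials.

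For the bound on $n$, if $q=3$ every factor is $\ge 2$, so $2^{n}\le r$, and reading the finite list of admissible $r$ off Section \ref{exceptsetsect} gives $n\le 2$. For $q=2$ I would split the blocks into \emph{active} ones ($v_{i,0}>1$, factor in $\{3,7,63\}$) and \emph{free} ones ($v_{i,0}=1$, factor $1$, invisible in $(\star)$). The product of the active factors equals the admissible $r$, and since an active block with $v_{i,0}=2,3,6$ consumes an irreducible of degree $2,3,6$, the number of active blocks is controlled by $(\star)$ together with the irreducible counts; the free blocks each consume one of the two degree-$1$ irreducibles $T,T+1$ of $\Ff_2$, shared with the exceptional block, so at most one free block survives, necessarily with $v_{3,0}=1$. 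Summing the two bounds yields $n\le 3$.

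For the converse I must exhibit, for each admissible profile, an actual solution: choose $(F_0,G_0)\in E_q$ realising the required $r$, then pick primes $P_k^{(i)}$ of the prescribed degrees, pairwise coprime and coprime to $F_0G_0$. Within a block the prescribed degrees $\deg(P_1)<\deg(P_2)<\cdots$ strictly increase, so the only obstruction is a clash of leading primes of the same small degree across different blocks. Counting monic irreducibles, $\Ff_2$ has exactly two of degree $1$ and exactly one of degree $2$; hence the profile $(1,1,1)$ (three degree-$1$ leading primes) and the profile $(2,2)$ (two degree-$2$ leading primes) cannot be realised, whereas every other admissible profile can, since the remaining demands — at most two of degree $1$, at most one of degree $2$, and primes of degree $\ge 3$, which are abundant — are met. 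Over $\Ff_3$ there are three irreducibles of each of degrees $1$ and $2$, so no admissible profile is obstructed and all $(v_{1,0},v_{2,0})$ with $v_{i,0}\mid 2$ occur. The step I expect to require the most care is pinning down the precise finite list of ratios $r=\sigma(G_0)/\varphi(F_0)$ produced by $E_q$ and verifying that the decomposition of Theorem \ref{MainThm2} is canonical enough that $n$ is well defined; granting this, the bound on $n$ and the identification of the two exceptions reduce to the irreducible count above.
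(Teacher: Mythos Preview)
Your reduction to $(\star)$ is correct and is exactly the paper's equation \eqref{excepseteq}; the telescoping derivation is in fact a cleaner packaging of the same computation, and the overall strategy---reduce to a finite constraint on the exceptional block and enumerate---is precisely the paper's.

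The gaps are in the shortcuts you propose for extracting the bound on $n$. For $q=3$, the inequality $2^n\le r$ is far too weak: the pair $(F_0,G_0)=(1,\,T(T+1)(T+2))$ lies in $E_3$ and gives $r=\sigma(G_0)/\varphi(F_0)=64$, so your inequality only yields $n\le 6$. What actually forces $n\le 2$ is the coprimality $\gcd(G_0,G_i)=1$: each block with $v_{i,0}=1$ must start its $G_i$ with a linear prime of $\Ff_3$ disjoint from those in $G_0$, and in this example $G_0$ already owns all three, so no such block is allowed and one is forced to $a=0$, $b=2$, $n=2$. In general one must solve $(\star)$ \emph{together with} these packing constraints and the parity constraint of Lemma~\ref{prelem}; that joint system is exactly what Section~\ref{exceptsetsect} enumerates. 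For $q=2$, your claim that ``at most one free block survives'' is false as stated: $(F_0,G_0)=(1,1)$ with two free blocks realises the profile $(v_{1,0},v_{2,0})=(1,1)$ and appears in the paper's table. What the enumeration really shows is that two free blocks cannot coexist with two active blocks, and this comes from comparing the $3$- and $7$-adic valuations on both sides of $(\star)$ against the supply of degree-$2$, $3$, and $6$ primes---not merely from counting linear irreducibles. Your identification of the obstructions $(2,2)$ and $(1,1,1)$ via $\pi_2(2)=1$ and $\pi_2(1)=2$ is correct, but the converse direction (every other profile is realised) still requires, for each profile, an explicit $(F_0,G_0)\in E_q$ making $(\star)$ hold, which is again the content of the tables.
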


Note that the two exceptions in the case $q=2$ come from the fact that $\Ff_2$ is a very small field and hence only has $2$ polynomials of degree $1$ and only $1$ polynomial of degree $2$

\subsection{Other Formulations}

While it is widely agreed that the definition of $\sigma$ in \eqref{sigma} is the correct analogue there are, however, other analogues we may consider. Define
$$\sigma_{nm}(F) = \sum_{\substack{D|F \\ D \mbox{ non-monic} }} |D|$$
to be the sum over not necessarily monic divisors of $F$.

\begin{thm}

We have infinitely many solutions to $\varphi(F) = \sigma_{nm}(G)$ for $F,G\in\Ff_q[T]$, for any $q$.

\end{thm}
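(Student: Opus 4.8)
The plan is to exhibit an explicit infinite family, so no deep machinery is needed. First I would pin down what $\sigma_{nm}$ computes: I read $\sigma_{nm}(G)=\sum_{D\mid G}|D|$ as the sum of $|D|$ over \emph{all} divisors $D$ of $G$, monic or not (this is the reading forced by the phrase ``not necessarily monic'', and also the only one making the statement true for every $q$). Since each monic divisor $D_0$ of $G$ has exactly $q-1$ associates $cD_0$, $c\in\Ff_q^*$, each of which divides $G$ and satisfies $|cD_0|=|D_0|$, one immediately gets $\sigma_{nm}(G)=(q-1)\sigma(G)$. For $q=2$ this says $\sigma_{nm}=\sigma$, so that case already follows from the $q=2$ part of Theorem~\ref{Thm1}; the content is in $q\ge 3$.

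The single idea is to cancel the factor $q-1$ sitting in $\sigma_{nm}$ against the factor $q-1$ hidden inside $\varphi$ of a prime. If $P\in\Ff_q[T]$ is irreducible of degree $n$, then $\varphi(P)=|P|-1=q^n-1=(q-1)(1+q+\cdots+q^{n-1})$. On the other hand, the divisors of $T^{n-1}$ are exactly the $cT^i$ with $c\in\Ff_q^*$ and $0\le i\le n-1$, so $\sigma_{nm}(T^{n-1})=(q-1)\sum_{i=0}^{n-1}q^i=(q-1)\cdot\frac{q^n-1}{q-1}=q^n-1$. Hence $\varphi(P)=\sigma_{nm}(T^{n-1})$, i.e.\ $(F,G)=(P,T^{n-1})$ solves the equation for every irreducible $P$.

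To conclude I would invoke the prime polynomial theorem: $\Ff_q[T]$ contains irreducible polynomials of every degree $n\ge 1$ (indeed $\sim q^n/n$ of them), so $\{(P,T^{\deg P-1}):P\in\Ff_q[T]\text{ irreducible}\}$ is an infinite family of solutions, for every $q$. I do not expect any genuine obstacle here: the only thing to notice is the pairing, after which the identity $(q-1)\cdot\frac{q^n-1}{q-1}=q^n-1=\varphi(P)$ finishes the proof. The point worth flagging is the contrast with the original problem $\varphi(F)=\sigma(G)$: there the extra factor $q-1$ is absent, one must compare the base-$q$ digits of $q^n-1$ and $q^n+1$ head-on, and it is precisely that rigidity (handled via Zsigmondy's theorem) that makes nontrivial solutions essentially impossible for $q\ne 2,3$.
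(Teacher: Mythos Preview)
Your proof is correct and is essentially the paper's own argument: the paper also takes $G=T^n$ and $F$ a prime of degree $n+1$, computing $\sigma_{nm}(T^n)=(q-1)\sum_{j=0}^n q^j=q^{n+1}-1=\varphi(F)$. Your version is slightly more expansive (the observation $\sigma_{nm}=(q-1)\sigma$, the aside about $q=2$), but the construction and the verification are the same up to reindexing.
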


\begin{proof}

We have
$$\sigma_{nm}(T^n) = \sum_{\alpha\in\Ff_q^*}\sum_{j=0}^{n} |\alpha T^j| = (q-1)\sum_{j=0}^n q^j = q^{n+1}-1 = \varphi(P)$$
where $P$ is any prime polynomial of degree $n+1$.

\end{proof}

However, we usually restrict to considering monic divisors as being monic in the function field setting is the analogue of being positive in the integers. Therefore, the correct analogue of $\sigma_{nm}$ in the integer setting would be summing up all the positive \textit{and} negative divisors of an integer. But then it is clear that this would always yield zero and then we get no solutions to $\sigma_{nm}(n)= \varphi(m)$ for $n,m\in\Z$. So again, the analogue seems to fail.

Since we are looking at analogues of sums of divisors, another natural choice would be to do just that: sum the divisors. Thus, we can consider the new function
$$\widetilde{\sigma}(F) = \sum_{D|F}D.$$
Now, to consider incidences to $\widetilde{\sigma}$ and $\varphi$, it is clear we must modify $\varphi$ slightly in order for this question to make sense. Thus we define
$$\widetilde{\varphi}(F) = \prod_{P|F} P^{v_P(F)-1}(P-1).$$
That is, we just remove the norm function in the definition of the usual $\varphi$.

\begin{thm}\label{minithm2}

The number of solutions to $\widetilde{\varphi}(F) = \widetilde{\sigma}(G)$ for $F,G\in\Ff_q[T]$ with $\deg(F)=\deg(G)=n$ is $\gg \frac{q^n}{n^2}$ as $q$ tends to infinity.

\end{thm}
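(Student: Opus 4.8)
The plan is to build an explicit family of solutions out of ``twin'' irreducible polynomials and count it with the function field Hardy--Littlewood conjecture.

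The engine is a one-line identity. For $P\in\Ff_q[T]$ monic irreducible of degree $n$, the definitions give $\widetilde{\varphi}(P)=P^{0}(P-1)=P-1$ and $\widetilde{\sigma}(P)=1+P$; the obstruction present for the usual functions, where $\sigma(P)=|P|+1\neq|P+2|-1=\varphi(P+2)$, disappears once the norm is dropped. Indeed, whenever $P$ and $P+2$ are \emph{both} monic irreducible of degree $n$ (so that $P+2$ is automatically monic of degree $n$),
$$\widetilde{\varphi}(P+2)=(P+2)-1=P+1=\widetilde{\sigma}(P),$$
so $(F,G)=(P+2,P)$ is a solution with $\deg F=\deg G=n$, and distinct $P$ give distinct ordered pairs. (When $q$ is a power of $2$ this collapses to $\widetilde{\varphi}(P)=\widetilde{\sigma}(P)$, so \emph{every} monic irreducible $P$ of degree $n$ already supplies the solution $(P,P)$.) Hence the number of solutions of the required shape is at least $N_q(n):=\#\{P\text{ monic irreducible of degree }n:\ P+2\text{ irreducible}\}$.

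It remains to show $N_q(n)\gg q^n/n^2$. If $\mathrm{char}\,\Ff_q=2$ then $P+2=P$ and $N_q(n)$ is the full count of monic irreducibles of degree $n$, which by the prime polynomial theorem is $\frac1n\sum_{d\mid n}\mu(d)q^{n/d}=\frac{q^n}{n}+O(q^{n/2})\gg q^n/n^2$. If $\mathrm{char}\,\Ff_q\neq2$ then $2$ is a nonzero constant, so $P$ and $P+2$ can be simultaneously irreducible, and the Hardy--Littlewood prime $k$-tuple conjecture over $\Ff_q[T]$ in the large-$q$ limit --- the twin prime case being \cite{BP} and the general case \cite{BS} --- yields an asymptotic formula for $N_q(n)$ of order $q^n/n^2$ as $q\to\infty$ with $n$ fixed; in particular $N_q(n)\gg q^n/n^2$. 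Combined with the previous step this proves the theorem.

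I expect the whole argument to be short, with the only genuine input being the function field prime tuple estimate, so the point to be careful about is that one needs it in \emph{quantitative} form --- an asymptotic count, or at least the lower bound $\gg q^n/n^2$ with an implied constant that does not depend on $n$ --- rather than mere existence of twin irreducibles. For a shift by a nonzero constant the main constant (singular series) in that asymptotic is bounded away from $0$ uniformly, so \cite{BP,BS} provide exactly what is needed; everything else, namely the identity $\widetilde{\varphi}(P+2)=P+1=\widetilde{\sigma}(P)$ and the elementary count of irreducibles in characteristic $2$, is immediate.
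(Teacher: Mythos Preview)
Your proposal is correct and follows essentially the same route as the paper: exhibit the identity $\widetilde{\sigma}(P)=P+1=\widetilde{\varphi}(P+2)$ for irreducible $P$ with $P+2$ irreducible, and then invoke the function field Hardy--Littlewood/twin-prime results of Bender--Pollack and Bary-Soroker to count such $P$. You are in fact more careful than the paper, separating out the characteristic-$2$ case and flagging that one needs the quantitative lower bound rather than mere existence.
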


\begin{proof}

The Hardy-Littlewood Theorem for function fields (\cite{BS,BP}) states that as $q$ tends to infinity, the number of primes $P$ of a fixed degree $n$ such that $P+2$ is also prime is $\gg \frac{q^n}{n^2}$ as $q$ tends to infinity. Now, it is easy to see that $\widetilde{\sigma}(P) = \widetilde{\varphi}(P+2)$.

\end{proof}

As we mentioned above, Ford and Pollack \cite{FP}, showed that under a uniform Hardy-Littlewood conjecture, they can show that the number of solutions to $\varphi(n)=\sigma(m)$ with $n,m\leq x$ is $\geq \frac{x}{\log(x)^{1+o(1)}}$. Now, Bary-Soroker \cite{BS} gives us a uniform Hardy-Littlewood conjecture in the large $q$ limit. So it likely possible to adapt Ford and Pollack's methods to the function field setting and prove, unconditionally, that there are $\geq \frac{q^n}{n^{1+o(1)}}$ solutions to $\widetilde{\varphi}(F)=\widetilde{\sigma}(G)$.

We note that in the special case $q=2$, we get that $\widetilde{\varphi}(P) = \widetilde{\sigma}(P)$ for all primes $P$. Therefore, we get that the number of solutions in $\Ff_2[T]$ with $\deg(F)=\deg(G)=n$ will be $\geq \frac{1}{2} 2^n$, as $F=G$, with $F$ square-free will always give a solution. It would be interesting to determine if for any other $q$ we get a positive proportion of solutions to $\widetilde{\phi}(F)=\widetilde{\sigma}(G)$ with $\deg(F)=\deg(G)=n$ as $n$ tends to infinity.

\textbf{Acknowledgements:} I would like to thank Jake Chinis for initially asking me this question and for useful conversations at the early stages. I would also like to thank Zeev Rudnick for pointing me to the work of Zsigmondy and Andr\'es Jaramillo Puentes for suggesting computation tools that helped with enumerating the exceptional sets.

The research leading to these results has received funding from the European Research Council under the European Union's Seventh Framework Programme (FP7/2007-2013) / ERC grant agreement n$^{\text{o}}$ 320755.

\section{Proof of Theorem \ref{Thm1}}

\subsection{Proof of Lemma \ref{Mainlem}}\label{mainlemproof}

Let $(F,G)\in V_2(\mathbf{v})$ for some $\mathbf{v}$ such that $v_0=1$. Then
\begin{align*}
\sigma(G) & = \prod_{i=1}^n (|P_i|^{v_i}+|P_i|^{v_i-1}+\dots+|P_i|+1) = \prod_{i=1}^n \frac{|P_i|^{v_i+1}-1}{|P_i|-1} \\
& = \prod_{i=1}^n \frac{2^{\prod_{j=1}^i (v_j+1)}-1}{2^{\prod_{j=1}^{i-1} (v_j+1)}-1}\\
& = 2^{\prod_{j=1}^n (v_j+1)}-1 = \varphi(F)
\end{align*}

Let $(F,G)\in V_3(\mathbf{v})$ for some $\mathbf{v}$ such that $v_0=2$. First, we note that since $v_0=2$ all the primes dividing $F$ and $G$ have degree greater than or equal to $2$. In particular, $\gcd(F,T)=\gcd(G,T(T+1))=1$. Therefore
\begin{align*}
\sigma(T(T+1)G) & = (3+1)^2 \prod_{i=1}^n (|P_i|^{v_i}+|P_i|^{v_i-1}+\dots+|P_i|+1) = 16 \prod_{i=1}^n \frac{|P_i|^{v_i+1}-1}{|P_i|-1} \\
& = 16 \prod_{i=1}^n \frac{3^{2\prod_{j=1}^i (v_j+1)}-1}{3^{2\prod_{j=1}^{i-1} (v_j+1)}-1}\\
& = 16 \frac{3^{2\prod_{j=1}^n (v_j+1)}-1}{3^2-1} \\
& = 2 (3^{2\prod_{j=1}^n (v_j+1)}-1) = \varphi(TF)
\end{align*}

\subsection{Preliminary Lemma}

Before we continue with the proof of Theorem \ref{Thm1} we have a preliminary lemma that reduces our search down significantly.

\begin{lem}\label{prelem}

Suppose $\varphi(F)=\sigma(G)$ then $F$ must be square free. Moreover, if $q\not=2$, then the number of prime divisors of $F$ must be even.

\end{lem}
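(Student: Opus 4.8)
The plan is to compare $\varphi(F)$ and $\sigma(G)$ by reducing everything modulo a carefully chosen prime of $\mathbb{Z}$, namely a prime divisor of $q-1$ when $q\neq 2$, together with a $2$-adic valuation argument. First I would write $F = \prod_{i} P_i^{e_i}$ and observe from \eqref{phi} that
\begin{align*}
\varphi(F) = \prod_i |P_i|^{e_i - 1}(|P_i| - 1).
\end{align*}
Since $|P_i| = q^{\deg P_i} \equiv 1 \pmod{q-1}$, every factor $|P_i| - 1$ is divisible by $q - 1$, while $|P_i|^{e_i-1} \equiv 1 \pmod{q-1}$. Hence if any $e_i \geq 2$, the power of $q$ dividing $\varphi(F)$ is positive, whereas from \eqref{sigma} we have $\sigma(G) = \prod_{Q^{f}\| G}(1 + |Q| + \cdots + |Q|^{f}) \equiv \prod (f+1) \pmod{q}$ — in particular $\sigma(G)$ is never divisible by $q$ unless... here I need to be slightly careful: actually $\sigma(G)$ can be divisible by $q$ only if some $f+1 \equiv 0$, but the cleaner statement is that $q \nmid |P_i|^{e_i-1}(|P_i|-1)$ would be needed. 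Let me instead run the square-free argument through a single prime $\ell \mid q$: $\varphi(F) = \prod_i q^{(e_i-1)\deg P_i}(|P_i|-1)$, so $v_\ell(\varphi(F)) = \sum_i (e_i - 1)\deg P_i \cdot v_\ell(q)$, while $\sigma(G) = \sum_{D \mid G}|D|$ is a sum of powers of $q$ one of which is $|D| = 1$ (the divisor $D = 1$), and every other term $|D|$ with $D \neq 1$ is divisible by $q$; hence $\sigma(G) \equiv 1 \pmod{q}$ and in particular $\ell \nmid \sigma(G)$. Therefore $v_\ell(\varphi(F)) = 0$, forcing $e_i = 1$ for all $i$, i.e. $F$ is square-free. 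This handles the first claim for every $q$ (including $q = 2, 3$).

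For the second claim, assume $q \neq 2$ and fix a prime $\ell \mid q - 1$. Now $F = \prod_{i=1}^{r} P_i$ is square-free, so
\begin{align*}
\varphi(F) = \prod_{i=1}^{r}(q^{\deg P_i} - 1).
\end{align*}
Each factor $q^{\deg P_i} - 1$ is divisible by $q - 1$, hence by $\ell$, so $v_\ell(\varphi(F)) \geq r$; more precisely, writing $a = v_\ell(q-1) \geq 1$, a lifting-the-exponent computation gives $v_\ell(q^{m}-1) = a + v_\ell(m)$ when $\ell$ is odd (and a slightly modified formula when $\ell = 2$), so $v_\ell(\varphi(F)) = ra + \sum_i v_\ell(\deg P_i)$. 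On the other side, I would compute $\sigma(G) \bmod \ell$: since $|Q| = q^{\deg Q} \equiv 1 \pmod{\ell}$ for every prime $Q$, each term $|D|$ in $\sigma(G) = \sum_{D\mid G}|D|$ is $\equiv 1 \pmod \ell$, and the number of monic divisors of $G = \prod Q_j^{f_j}$ is $\prod_j (f_j + 1)$, whence $\sigma(G) \equiv \prod_j(f_j+1) \pmod{\ell}$. Combining: $\ell \mid \varphi(F) = \sigma(G)$ forces $\ell \mid \prod_j(f_j+1)$, i.e. some $f_j \equiv -1 \pmod \ell$.

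This last observation is not yet parity of $r$; to extract parity I would pick $\ell = 2$ specifically (legitimate since $q$ odd means $2 \mid q-1$) and track signs rather than just divisibility. With $\ell = 2$: $q^{\deg P_i} - 1$ is even, and $(q^{\deg P_i}-1)/2^{v_2(q^{\deg P_i}-1)}$ is odd, so the \emph{odd part} of $\varphi(F)$ is $\prod_i \big((q^{\deg P_i}-1)/2^{\bullet}\big)$; meanwhile I would show $\sigma(G) \equiv \big(\text{number of divisors of }G\big) \equiv \prod_j (f_j+1) \pmod{8}$ or modulo a suitable power of $2$, by writing $|D| = q^{\deg D}$ and expanding $q^{\deg D} = (1 + (q-1))^{\deg D} \equiv 1 + (q-1)\deg D \pmod{(q-1)^2}$. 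Summing over $D \mid G$ gives $\sigma(G) \equiv \tau(G) + (q-1)\sum_{D\mid G}\deg D \pmod{(q-1)^2}$, and $\sum_{D \mid G}\deg D$ is itself computable. Matching the $2$-adic valuations and the residues of the odd parts on the two sides of $\varphi(F) = \sigma(G)$ should pin down $r \bmod 2$. The main obstacle is precisely this final bookkeeping: getting a clean enough congruence for $\sigma(G)$ modulo a high enough power of $2$ to force $r$ even, rather than merely forcing $2 \mid \varphi(F)$ which gives no parity information on its own. I would expect the argument to hinge on the identity $\varphi(F) = \prod_i(q^{\deg P_i}-1)$ having $v_2 \geq r$ with equality related to how many $\deg P_i$ are even, played against the fact that $\sigma(G) \equiv 1 \pmod{q-1}$ always holds, which already forces a strong constraint and, when $q - 1$ is even, propagates to a parity constraint on $r$.
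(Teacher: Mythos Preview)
Your argument for the first claim is correct and is essentially the paper's idea in different clothing: you reduce modulo the characteristic prime and observe that $\sigma(G)\equiv 1\pmod q$ because the only divisor $D$ with $q\nmid |D|$ is $D=1$, while any repeated prime in $F$ would make $q\mid \varphi(F)$. The paper packages this more compactly via the M\"obius identity
\[
\varphi(F)=\sum_{D\mid F}\mu(F/D)\,|D|\equiv \mu(F)\pmod q,
\]
but the content is the same.

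The second claim, however, is not proved in your proposal. The $2$-adic bookkeeping you set up does not close, and you acknowledge as much; moreover, the final sentence contains an actual error: it is \emph{not} true that $\sigma(G)\equiv 1\pmod{q-1}$ in general. Your own earlier computation shows $\sigma(G)\equiv \tau(G)\pmod{q-1}$, and $\tau(G)$ need not be $1$. So the hoped-for ``strong constraint'' is not there, and no amount of matching $v_2$'s will recover the parity of $r$ from that direction without further input.

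The fix is much simpler than the route you are attempting, and in fact you already have all the ingredients. Once you know $F=\prod_{i=1}^r P_i$ is square-free, just reduce $\varphi(F)$ modulo $q$ again:
\[
\varphi(F)=\prod_{i=1}^r\bigl(q^{\deg P_i}-1\bigr)\equiv(-1)^r\pmod q.
\]
Combined with $\sigma(G)\equiv 1\pmod q$, this gives $(-1)^r\equiv 1\pmod q$, i.e.\ $q\mid 1-(-1)^r$, which for $q\neq 2$ forces $r$ even. This is exactly what the paper does: the single congruence $\mu(F)\equiv 1\pmod q$ yields both conclusions at once, since $\mu(F)\in\{-1,0,1\}$ and only $\mu(F)=1$ survives when $q>2$.
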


\begin{proof}

We can rewrite $\varphi(F)$ as a sum of divisors in the following way:
$$\varphi(F) = \sum_{D|F} \mu(F/D) |D|.$$
Then we notice that $\varphi(F)\equiv \mu(F) \mod{q}$. Moreover, we note that $\sigma(G)\equiv 1 \mod{q}$. Hence, if $\varphi(F)=\sigma(G)$, we must have $\mu(F) \equiv 1 \mod{q}$. The result then follows.

\end{proof}

\subsection{Key Proposition}\label{keypropsec}

For any sequence $U_1,U_2,\dots,U_n,\dots$, we will say that $U_n$ has a primitive prime divisor if there exists a prime, $p$, such that $p|U_n$ but $p \nmid U_m$ for all $m<n$. A major tool in this paper is the following result of Zsigmondy \cite{Zsig} on primitive prime divisors of a class of sequences.

\begin{thm}\label{ZsigThm}

For any $a>b$ positive, coprime integers all the elements of the sequence
$$\{a-b,a^2-b^2,\dots,a^n-b^n,\dots\}$$
have a primitive prime divisor unless $a=2$, $b=1$ and $n=6$ or $a+b$ is a power of $2$ and $n=2$.

\end{thm}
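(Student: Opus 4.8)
The plan is to give the classical cyclotomic-polynomial proof. For $n\geq 1$ let $\Phi_n(X,Y)=\prod_{1\leq k\leq n,\ \gcd(k,n)=1}(X-\zeta^k Y)$ with $\zeta=e^{2\pi i/n}$; this lies in $\mathbb{Z}[X,Y]$, is homogeneous of degree $\varphi(n)$, and satisfies $X^n-Y^n=\prod_{d\mid n}\Phi_d(X,Y)$. Write $\Phi_n:=\Phi_n(a,b)$. Since $\gcd(a,b)=1$, any prime $p$ dividing some $a^m-b^m$ is coprime to $ab$, so $ab^{-1}\bmod p$ is defined and $p\mid a^m-b^m$ iff $\mathrm{ord}_p(ab^{-1})\mid m$; hence $p$ is a primitive prime divisor of $a^n-b^n$ exactly when $\mathrm{ord}_p(ab^{-1})=n$, and producing one amounts to exhibiting a prime $p\mid\Phi_n$ with $\mathrm{ord}_p(ab^{-1})=n$. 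The $n=1$ term $a-b$ has such a divisor as soon as $a-b>1$, so I set that degenerate case aside and take $n\geq2$.

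First I would prove the standard structural lemma: if a prime $p$ divides $\Phi_n$ but $d:=\mathrm{ord}_p(ab^{-1})<n$, then $n/d$ is a power of $p$, the $p$-free part of $n$ divides $p-1$ (so $p$ is the largest prime factor $P(n)$ of $n$), and, provided $n>2$, $v_p(\Phi_n)=1$. This follows by feeding the factorization $X^n-Y^n=\prod_{d\mid n}\Phi_d(X,Y)$ into the lifting-the-exponent lemma applied to $a^n-b^n$ and $a^d-b^d$; the prime $p=2$ requires the usual separate treatment, and it is exactly there --- at $n=2$, where the conclusion $v_p(\Phi_n)=1$ breaks down --- that the ``power of $2$'' exception originates. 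The consequence I want is: if $\Phi_n$ has no primitive prime divisor then every prime dividing $\Phi_n$ is non-primitive, hence for $n>2$ every such prime equals $P(n)$ and divides $\Phi_n$ once, so $\Phi_n=P(n)\leq n$; while for $n=2$ one gets that $\Phi_2=a+b$ is a power of $2$ (whence $a,b$ are both odd and $2\mid a-b$, confirming it is not primitive).

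The technical heart, and the step I expect to be the main obstacle, is the lower bound $\Phi_n(a,b)>n$ for every $n\geq 7$. Writing $\Phi_n(a,b)=b^{\varphi(n)}\Phi_n(a/b)$ with $x:=a/b>1$ and using $\Phi_n(x)=\prod_{d\mid n}(x^{n/d}-1)^{\mu(d)}$, one isolates the dominant numerator factor $x^n-1$ and bounds the product of the denominator factors $x^{n/d}-1$ (those with $\mu(d)=-1$) from above; the relevant exponent sum works out to $\tfrac{n}{2}\bigl(\prod_{p\mid n}(1+p^{-1})-\prod_{p\mid n}(1-p^{-1})\bigr)$, and combining this with $b^{\varphi(n)}\geq1$ and $a\geq2$ yields the inequality. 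The delicate point --- the reason crude bounds do not suffice --- is that the estimate is nearly sharp when $b=1$ and $n$ is a prime power (so $\Phi_n(1)$ is a prime rather than $1$) and even for a few small composite $n$ such as $n=12$, where $\Phi_{12}(2,1)=13$ barely exceeds $12$; so in practice one verifies $7\leq n\leq N_0$ for some explicit $N_0$ by direct computation and handles $n>N_0$ by a clean asymptotic estimate of $\Phi_n$.

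Finally I would assemble the pieces. For $n\geq 7$, if $\Phi_n$ had no primitive prime divisor then the structural lemma forces $\Phi_n=P(n)\leq n$, contradicting $\Phi_n>n$; hence $a^n-b^n$ has a primitive prime divisor. For $n\in\{3,4,5\}$ direct inspection gives $\Phi_n(a,b)>P(n)$ for all coprime $a>b\geq1$, so a primitive prime divisor exists there too. For $n=6$ the equation $\Phi_6(a,b)=a^2-ab+b^2=P(6)=3$ has the single coprime solution $(a,b)=(2,1)$, for which $3\mid 2^2-1$, producing the genuine exception $2^6-1$; every other pair gives $\Phi_6>3$ and hence a primitive prime divisor. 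For $n=2$ the lemma shows the only obstruction is $\Phi_2=a+b$ being a power of $2$. This exhausts all $n$ and yields exactly the two exceptions in the statement.
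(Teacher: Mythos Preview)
The paper does not prove this theorem at all: it is stated as Theorem~\ref{ZsigThm} with a citation to Zsigmondy's original paper \cite{Zsig} and is then used as a black box in the proof of Proposition~\ref{keyprop}. So there is no ``paper's own proof'' to compare against.

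That said, your outline is the standard cyclotomic-polynomial argument and is essentially correct. The structural lemma (non-primitive primes dividing $\Phi_n(a,b)$ must equal the largest prime factor $P(n)$ of $n$ and, for $n>2$, divide $\Phi_n$ only to the first power) is exactly the right intermediate result, and you have correctly isolated the two places where the exceptions arise: the failure of $v_p(\Phi_n)=1$ at $n=2$, $p=2$, and the single coprime solution of $a^2-ab+b^2=3$ at $n=6$. Your plan for the lower bound $\Phi_n(a,b)>n$ for $n\geq 7$ is the usual one, and you are right that the borderline cases (small $n$, $b=1$, $n$ a prime power or $n=12$) need to be checked by hand. One small omission: the theorem as stated in the paper also quietly skips the degenerate case $a-b=1$ at $n=1$, which you noticed; this is harmless for the paper's application since it only ever invokes the result with $b=1$, $a=q\geq 2$, and $n\geq 1$, and the $n=1$ term $q-1$ is not needed to have a primitive prime divisor in the subsequent argument.
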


We will use this theorem to show that unless $a=2$ or $3$, an element in the set multiplicatively generated by $\{a-1,a^2-1,\dots,\}$ will have a unique decomposition. This will be instrumental in proving the absence of solutions when $q\not=2,3$.

First, recall that a multiset is a set of not necessarily distinct objects $\{x_1,\dots,x_n\}$. The multiplicity of an object $x$ is the number of $x_i=x$, with the multiplicity being $0$ if $x$ does not appear in the multiset. We say two multisets $\{x_1,\dots,x_n\}$ and $\{y_1,\dots,y_m\}$ are equal if each object occurs with the same multiplicity.

\begin{prop}\label{keyprop}

Let $a$ be any integer greater than $1$, $(n_1,\dots,n_t)$ and $(m_1,\dots,m_s)$ any tuples of positive integers such that
$$\prod_{i=1}^t (a^{n_i}-1) = \prod_{j=1}^s (a^{m_j}-1).$$
Then if $a=2$, we must have $\{n_i : n_i\nmid 6\} = \{m_j : m_j\nmid6\}$ as multisets; if $a=3$, we must have $\{n_i :  n_i \nmid 2\} = \{m_j: m_j\nmid 2\}$ as multisets; if $a\not=2,3$, we must have $\{n_1,\dots,n_t\} = \{m_1,\dots,m_s\}$ as multisets.

\end{prop}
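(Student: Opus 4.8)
The plan is to induct on $\max(n_i,m_j)$ — call it $N$ — using Zsigmondy's theorem (Theorem~\ref{ZsigThm}) to peel off the largest exponent from both sides. Assume without loss of generality $N = n_1 = \max_i n_i$. If $N \geq 2$ and we are not in one of Zsigmondy's exceptional cases ($a=2, N=6$; or $a+1$ a power of $2$ and $N=2$), then $a^N - 1$ has a primitive prime divisor $p$, i.e. $p \mid a^N - 1$ but $p \nmid a^k - 1$ for every $k < N$. Since $p$ divides the left-hand product, it must divide the right-hand product, so $p \mid a^{m_j} - 1$ for some $j$, forcing $m_j \geq N$ by primitivity; combined with $m_j \leq N$ this gives $m_j = N$. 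Thus $N$ appears on both sides; after possibly matching several equal copies of $N$ (running this argument on each copy, noting $p$ — or rather the full $p$-adic valuation — tracks multiplicities, or more simply dividing out one factor of $a^N-1$ at a time), we cancel $a^N - 1$ from both sides and apply the inductive hypothesis to the shorter tuples with strictly smaller maximum. The base case is $N = 1$, where the equation reads $(a-1)^t = (a-1)^s$, forcing $t = s$, and all exponents equal $1$, so the multisets agree trivially.

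To make the multiplicity bookkeeping clean, I would actually phrase the inductive step using $p$-adic valuations: let $p$ be a primitive prime divisor of $a^N - 1$ and let $e = v_p(a^N - 1) \geq 1$. Since $p \nmid a^k - 1$ for $k < N$, and for $k$ a multiple of $N$ one still has $v_p(a^k-1) \geq e$, the contribution of $p$ to each side comes precisely from those factors whose exponent is a multiple of $N$. A short argument (using that $a^{N} \equiv 1 \pmod p$ but $a^{d} \not\equiv 1 \pmod{p}$ for proper divisors $d$ of $N$, so $N$ is exactly the multiplicative order of $a$ mod $p$) shows that actually among the $n_i$ equal to $N$ and the larger multiples of $N$ the valuations must balance; restricting attention to the largest exponent $N$ itself, the number of $n_i$ equal to $N$ must equal the number of $m_j$ equal to $N$. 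Then cancel all of those factors simultaneously and recurse.

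The one genuine subtlety — and the reason the statement is not simply "the multisets always agree" — is the two exceptional cases in Zsigmondy's theorem. When $a = 2$ and $N = 6$: $2^6 - 1 = 63 = 7 \cdot 9$, and $7 = $ (a divisor of) $2^3-1$ while $9 = 3^2$ relates to $2^2 - 1 = 3$, so $6$ can be "traded" for combinations of $2$'s and $3$'s and no primitive prime forces it to appear on the other side; hence we can only conclude agreement after discarding all exponents dividing $6$ (namely $1,2,3,6$). Similarly when $a = 3$: here $3+1 = 4$ is a power of $2$, so $N = 2$ is exceptional ($3^2 - 1 = 8 = 2^3$, expressible via $3-1 = 2$), and we must discard exponents dividing $2$ (namely $1, 2$); one checks $a = 3, N = 6$ is \emph{not} exceptional since $a \neq 2$. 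So the induction proceeds exactly as above for all $N$ outside the relevant small set, and the "bad" exponents are simply collected into the symmetric-difference-free part of the conclusion. The main obstacle is therefore not the induction itself but verifying carefully that, in the non-exceptional range, the peeling argument genuinely terminates and that no bad exponent can masquerade as a good one — i.e. that the only failures of unique factorization are the ones coming from Zsigmondy's explicit exceptions, which is precisely what Theorem~\ref{ZsigThm} guarantees.
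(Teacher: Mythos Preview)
Your approach is essentially the paper's: use Zsigmondy to locate a primitive prime at the largest exponent, match multiplicities via its $p$-adic valuation, cancel, and descend. The induction on $\max(n_i,m_j)$ is equivalent to the paper's descent on the largest $k$ with $p_k\mid N$.

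There is one genuine gap. You correctly record that the Zsigmondy exception at $N=2$ occurs whenever $a+1$ is a power of $2$, i.e.\ for every $a=2^m-1$, not only $a=3$. But in your concluding discussion you handle only $a=3$, where the weakened conclusion (discard exponents dividing $2$) is exactly what the proposition asserts. For $a=7,15,31,\dots$ the proposition claims that the \emph{full} multisets agree, so after your descent strips off all exponents $\geq 3$ you are left with
\[
(a-1)^{c_1}(a^2-1)^{c_2}=(a-1)^{d_1}(a^2-1)^{d_2}
\]
and must still show $c_1=d_1$, $c_2=d_2$. Zsigmondy says nothing here; the paper closes this case by hand, writing $a-1=2(2^{m-1}-1)$ and $a^2-1=2^{m+1}(2^{m-1}-1)$ and comparing the $2$-adic valuation against the exponent of $2^{m-1}-1$, which yields a linear system that is nonsingular exactly when $m\neq 2$. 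Without this extra step your argument only proves the $a=3$-style conclusion for every $a=2^m-1$, which is weaker than stated.

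A smaller wrinkle for $a=2$: your induction on the maximum halts at $N=6$, but the surviving exponents then lie in $\{1,\dots,6\}$, which includes $4$ and $5$ --- and these do not divide $6$, so they must still be matched. The fix is immediate (the primitive primes $5$ and $31$ of $2^4-1$ and $2^5-1$ do not divide $2^6-1$, since $4\nmid 6$ and $5\nmid 6$, so one can count $4$'s and $5$'s even with $6$'s present), but the induction as written does not reach them.
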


\begin{proof}

We will begin in the case where $a\not=2,2^m-1$. Then by Theorem \ref{ZsigThm}, we get that there always exists a prime, $p$, that divides $a^n-1$ but does not divide $a^m-1$ for all such $m<n$. Define $p_n$ as the smallest such prime. Denote
$$N_0:=N = \prod_{i=1}^t (a^{n_i}-1) = \prod_{j=1}^s (a^{m_j}-1).$$
Let $k$ be largest such that $p_k|N$. Then we must have that $a^k-1 |N$. Indeed, if there were some $\ell>k$ such that $a^\ell-1|N$, then $p_\ell |N$ contradicting the maximality of $k$. Moreover, if all the $n_i,m_j <k$, then we could not have $p_k|N$ as $p_k \nmid a^m-1$ for all $m< k$. By the same reasoning we see that
$$N_1 := \frac{N}{(a^k-1)^{v_{p_k}(N)/v_{p_k}(a^k-1)}} \in \Z$$
and $p_k \nmid N_1$. Hence, we need that
$$|\{i : n_i=k\}| = |\{j : m_j = k\}| = v_{p_k}(N)/v_{p_k}(a^k-1).$$
Repeating the same process with $N_1$ multiple times we get that for any $\ell$, we must have
$$|\{i : n_i=\ell\}| = |\{j : m_j = \ell\}| = v_{p_\ell}(N)/v_{p_\ell}(a^\ell-1)$$
and thus $\{n_1,\dots,n_t\} = \{m_1,\dots,m_s\}$ as multisets.

Now, if $a=2^m-1$, again by Theorem \ref{ZsigThm}, we can define $p_n$ in the same way as long as $n\not=1,2$ and, repeating the same process, we would find that for all $\ell\not=1,2$, we would get that
$$|\{i : n_i=\ell\}| = |\{j : m_j = \ell\}| = v_{p_\ell}(N)/v_{p_\ell}(a^\ell-1).$$
In particular, we have shown that $\{n_i :  n_i \nmid 2\} = \{m_j: m_j\nmid2\}$ as multisets which finishes the case for $a=3$.

We have reduced the question down to the case where all the $n_i,m_j$ are either $1$ or $2$. Let $c_\ell,d_\ell$ be the number of $n_i,m_j$ that equal $\ell$, respectively. Then we would need
$$(a-1)^{c_1}(a^2-1)^{c_2} = (a-1)^{d_1}(a^2-1)^{d_2}.$$
Now, since $a=2^m-1$, we get
\begin{multline*}
(a-1)^{c_1}(a^2-1)^{c_2} = 2^{c_1+(m+1)c_2}(2^{m-1}-1)^{c_1+c_2} \\
 = 2^{d_1+(m+1)d_2}(2^{m-1}-1)^{d_1+d_2} = (a-1)^{d_1}(a^2-1)^{d_2}.
\end{multline*}
Therefore, as long as $m\not=2$ (or $a\not=3$), we get that
$$c_1+(m+1)c_2 = d_1+(m+1)d_2 \quad \quad \quad c_1+c_2= d_1+d_2$$
whence $c_1=d_1$ and $c_2=d_2$ and $\{n_1,\dots,n_t\} = \{m_1,\dots,m_s\}$ as multisets.

Finally, when $a=2$, using the same method, Theorem \ref{ZsigThm} as well as the observation that the primes of $2^6-1$ come from $2^2-1$ and $2^3-1$ tells us that as long as $\ell\not=1,2,3,6$, we get that
$$|\{i : n_i=\ell\}| = |\{j : m_j = \ell\}| = v_{p_\ell}(N)/v_{p_\ell}(a^\ell-1).$$
This concludes the proof.

\end{proof}

\subsection{Proof of Theorem \ref{Thm1}}

We already proved the case where $q=2,3$ in Section \ref{mainlemproof}. Therefore, let $q\not=2,3$, and suppose that $\varphi(F)=\sigma(G)$. Then, by Lemma \ref{prelem}, $F$ must be square-free with an even number of prime divisors. Therefore,
$$\varphi(F) = \prod_{P|F} (|P|-1) = \prod_{P|F} (q^{\deg(P)}-1).$$
On the other hand if we write $G = \prod P^{v_P}$, then we would have
\begin{align*}
\sigma(G) & = \prod_{P|G} \sigma(P^{v_p}) = \prod_{P|G} (|P|^{v_p}+|P|^{v_p-1} + \dots + |P|+1) \\
& = \prod_{P|G} \frac{|P|^{v_p+1}-1}{|P|-1} = \prod_{P|G} \frac{q^{(v_p+1)\deg(P)}-1}{q^{\deg(P)}-1}.
\end{align*}
Since, $\varphi(F)=\sigma(G)$, we would then need
$$\prod_{P|F}(q^{\deg(P)}-1) \prod_{P|G} (q^{\deg(P)}-1) = \prod_{P|G} (q^{(v_p+1)\deg(P)}-1).$$
By Proposition \ref{keyprop}, we get
$$\{\deg(P) : P|F\} \cup \{\deg(P): P|G\} = \{(v_p+1)\deg(P) : P|G\}$$
as multisets. However, we see that the left hand side set has a size greater than or equal the right hand side with equality if and only if $\{\deg(P):P|F\}$ is empty. That is, if and only if $F=1$. Then we would have $\sigma(G)=\varphi(1)=1$ and hence $G=1$, as well.

\section{Characterising the Solutions}

We will now characterise all the solutions to $\varphi(F)=\sigma(G)$ when $q=2$ or $3$ thus proving Theorem \ref{MainThm2}.

Let $d_2=6$ and $d_3=2$ and define
$$\widetilde{E}_q = \{ (F_0,G_0)\in\Ff_q[T] : P|F_0 \implies \deg(P)|d_q \mbox{ and } P^v||G_0 \implies (v+1)\deg(P)|d_q\}.$$
Clearly $\widetilde{E}_q$ is finite and we will show that $E_q \subset \widetilde{E}_q$.

If $F=\prod_{i=1}^n P_i$ and $G= \prod_{i=1}^m Q_i^{v_i}$ such that $\varphi(F)=\sigma(G)$ then, as in the proof of Theorem \ref{Thm1}, we get
$$\prod_{i=1}^n \left(q^{\deg(P_i)}-1\right) \prod_{i=1}^m \left(q^{\deg(Q_i)}-1\right) = \prod_{i=1}^m \left(q^{(v_i+1)\deg(Q_i)}-1\right).$$

Applying Proposition \ref{keyprop} we need that
$$\{\deg(P_i) : \deg(P_i) \nmid d_q\} \cup \{\deg(Q_i) : \deg(Q_i) \nmid d_q\} = \{(v_i+1)\deg(Q_i) : (v_i+1)\deg(Q_i)\nmid d_q\}$$
as multisets.

We see that $\{\deg(Q_i) : \deg(Q_i) \nmid d_q\} = \{(v_i+1)\deg(Q_i) : (v_i+1)\deg(Q_i)\nmid d_q\}$ as multisets if and only if both sets are empty. Thus, if $\{\deg(P_i) : \deg(P_i) \nmid d_q\}$ is empty, then all three sets are empty and we get $(F,G)\in \widetilde{E}_q$.

Hence, without lose of generality, assume $\deg(P_n)\nmid d_q$. Then there exists a $Q_{i_1}$ such that $\deg(P_n) = (v_{i_1}+1)\deg(Q_{i_1})$. If $\deg(Q_{i_1})\nmid d_q$, then there exists a $Q_{i_2}$ such that $\deg(Q_{i_1}) = (v_{i_2}+1)\deg(Q_{i_2})$. We continue this process until we find a $Q_{i_k}$ such that $\deg(Q_{i_k})|d_q$. Relabel $Q_{i_j} = Q_{n,k-j+1}$ and $v_{i_j} = v_{n,k-j+1}$, so that we get
$$\deg(P_n) = \deg(Q_{n,1})\prod_{j=1}^k (v_{n,j}+1) \quad \quad \quad \deg(Q_{n,i}) = \deg(Q_{n,1})\prod_{j=1}^{i-1} (v_{n,j}+1).$$

That is, we find that $(P_n, \prod_{i=1}^k Q_{n,i}^{v_{n,i}})\in V_q(\mathbf{v})$ for some $\mathbf{v}$ such that $v_0=\deg(Q_{n,1})|d_q$.

Repeating this process for all the $P_j$ such that $\deg(P_j)\nmid d_q$ we get our result with $E_q$ some subset of $\widetilde{E}_q$.

\section{The Exceptional Sets}\label{exceptsetsect}

Let $F = \prod_{i=0}^n F_i$, $G = \prod_{i=0}^n G_i$ such that $(F_0,G_0)\in \widetilde{E}_q$, $\gcd(F_i,F_j)=\gcd(G_i,G_j)=1$, $(F_i,G_i)\in V_q(\mathbf{v}_i)$ for some $\mathbf{v}_i = (v_{i,0},v_{i,1},\dots,v_{i,n_i})$ with $v_{i,0}|d_q$ and $\varphi(F)=\sigma(G)$. In this section we will discuss what elements of $\widetilde{E}_q$ can appear in $E_q$ as well as the possible values for $n$ and the $\mathbf{v}_i$.

We have that
$$\sigma(G) = \prod_{P|G_0} \frac{q^{(v_P+1)\deg(P)}-1}{q^{\deg(P)}-1} \prod_{i=1}^n \frac{q^{v_{i,0} \prod_{j=1}^{n_i} (v_{i,j}+1) } - 1}{ q^{v_{i,0}}-1} $$
and
$$\varphi(F) = \prod_{P|F_0} \left(q^{\deg(P)}-1\right) \prod_{i=1}^n \left(q^{v_{i,0} \prod_{j=1}^{n_i} (v_{i,j}+1) } - 1 \right).$$

Hence, we need
\begin{align}\label{excepseteq}
\prod_{P|F_0} \left(q^{\deg(P)}-1\right)  \prod_{P|G_0}\left(q^{\deg(P)}-1\right) \prod_{i=1}^n \left(q^{v_{i,0}}-1\right) = \prod_{P|G_0} \left(q^{(v_P+1)\deg(P)}-1\right)
\end{align}

Notice that the degrees of the polynomials on the left hand side of \eqref{excepseteq} all divide $d_q$. Therefore, we must have that $(v_P+1)\deg(P)|d_q$ for all $P|G_0$ as well as otherwise we would necessarily have a prime dividing the right hand side of \eqref{excepseteq} that does not divide the left hand side, by Theorem \ref{ZsigThm}.

For ease of notation, we will denote
\begin{align}\label{w_d}
\omega_d(F) = \#\{P|F : \deg(P)=d\},
\end{align}
\begin{align}\label{w_di}
\omega_{d,i}(F) = \#\{P|F : \deg(P)=d, v_P=i\}
\end{align}
and
\begin{align}\label{pi_qd}
\pi_q(d) = \#\{P\in \Ff_q[T] : \deg(P)=d\}
\end{align}
Then we can rewrite \eqref{excepseteq} in terms of linear equations in the $\omega_d$ and $\omega_{d,i}$ of $F_0,G_0,G$ where $d|d_q$. Moreover, we have the obvious inequality $\omega_{d,i}(F)\leq \omega_d(F) \leq \pi_q(d)$.

\subsection{q=3}

We will begin with the case $q=3$ as it is simpler.

Using the fact that $d_q=2$, and our observation that $(v_P+1)\deg(P)|d_q$ for all $P|G_0$, we see that $G_0$ must be a product of linear primes with exponent $1$. In particular, we see that $\omega_{1,1}(G_0)=\omega_1(G_0)$.

Now, noting that $(3-1)=2$ and $(3^2-1)=2^3$, we can rewrite \eqref{excepseteq} as
\begin{align}\label{excepseteq3}
2^{\omega_1(F_0)+\omega_1(G) + 3(\omega_2(F_0)+\omega_2(G))} = 2^{3\omega_1(G_0)}.
\end{align}

Thus we need to find the solutions to
$$\omega_1(F_0)+\omega_1(G) + 3(\omega_2(F_0)+\omega_2(G)) = 3\omega_1(G_0)$$
under the constraints that
$$\omega_1(F_0), \omega_1(G)\leq \pi_3(1)=3 \quad \quad \omega_2(F_0), \omega_2(G) \leq \pi_3(2) = 3 \quad \quad \omega_1(G_0)\leq \omega_1(G)  $$

Manually going through all the possible solutions, we find that
\begin{align*}
(\omega_1(F_0),\omega_1(G_0),\omega_1(G),\omega_2(F_0),\omega_2(G))\in \{ & (0,0,0,0,0),(2,1,1,0,0),(1,1,2,0,0),(0,1,3,0,0), \\
& (1,2,2,1,0),(1,2,2,0,1),(3,2,3,0,0),(0,2,3,1,0), \\
& (0,2,3,0,1),(0,3,3,0,2),(0,3,3,1,1),(0,3,3,2,0),\\
& (3,3,3,0,1),(3,3,3,1,0)\}
\end{align*}

We summarize the information in the following table. $E_3$ is the set of tuples $(F_0,G_0)$ such that $F_0$ and $G_0$ are in the same row. We recall that $G_0$ is always a product of linear primes, so the $Q$'s appearing in the $G_0$ column will always be linear primes. Further, the third column shows the value of $n$ while the last column gives restrictions on the possible $\mathbf{v}$ values that can occur with $\emptyset$ indicating that $n=0$ and there would be no $V_3(\mathbf{v})$ part.

\begin{center}
\begin{tabular}{ c|c|c|c }
$F_0$ & $G_0$ & $n$ &$\mathbf{v}$\\
\hline
$1$ & $1$ & $0$ & $\emptyset$ \\
$P_1P_2, \deg(P_i)=1$ & $Q$ & $0$ & $\emptyset$ \\
$P_1P_2, \deg(P_i)=i$ & $Q_1Q_2$ & $0$ & $\emptyset$ \\
$P_1P_2, \deg(P_i)=2$ & $T(T+1)(T+2)$ & $0$ & $\emptyset$\\
$T(T+1)(T+2)P, \deg(P)=2$ & $T(T+1)(T+2)$ & $0$ & $\emptyset$ \\
$P, \deg(P)=1$ & $Q$ & $1$ & $v_0=1$ \\
$P, \deg(P)=2$ & $Q_1Q_2$ & $1$ & $v_0=1$ \\
$T(T+1)(T+2)$ & $Q_1Q_2$ & $1$ & $v_0=1$ \\
$P, \deg(P)=1$ & $Q_1Q_2$  & $1$ & $v_0=2$\\
$P, \deg(P)=2$ & $T(T+1)(T+2)$ & $1$ & $v_0=2$\\
$T(T+1)(T+2)$ & $T(T+1)(T+2)$ & $1$ & $v_0=2$\\
$1$ & $Q$ & $2$ & $v_{1,0}=v_{2,0}=1$ \\
$1$ & $Q_1Q_2$ & $2$ & $v_{1,0}=1, v_{2,0}=2$ \\
$1$ & $T(T+1)(T+2)$ & $2$ & $v_{1,0}=v_{2,0}=2$ \\

\end{tabular}
\end{center}

Observing this table proves Corollary \ref{MainThm2cor} for $q=3$.

\subsection{q=2}

%
%
%
%
%
%
%

Following the same method as for $q=3$, we can use the observation $2^2-1=3$, $2^3-1=7$, $2^6-1 = 3^2\cdot7$ to get that a solution to \eqref{excepseteq} corresponds to a solution to the system of equations
$$\omega_2(F_0) + \omega_2(G) + 2\omega_6(F_0)+2\omega_6(G) = \omega_{1,1}(G_0)+2\omega_{1,5}(G_0)+2\omega_{2,2}(G_0)+2\omega_{3,1}(G_0)$$
$$ \omega_3(F_0)+\omega_3(G)+ \omega_6(F_0)+\omega_6(G) = \omega_{1,2}(G_0)+\omega_{1,5}(G_0)+\omega_{2,2}(G_0)+\omega_{3,1}(G_0)$$
subject to the restraints that
$$\omega_2(F_0),\omega_2(G) \leq \pi_2(2)=1 \quad \quad \omega_3(F_0),\omega_3(G)\leq \pi_2(3)= 2 \quad \quad \omega_6(F_0),\omega_6(G) \leq \pi_2(6) = 9$$
$$\omega_{1,1}(G_0)+\omega_{1,2}(G_0)+\omega_{1,5}(G_0)\leq \pi_2(1) = 2 \quad \quad \omega_{2,2}(G_0)\leq \omega_2(G) \quad \quad \omega_{3,1}(G_0)\leq \omega_3(G) $$

Again, we can manually find all the solutions to the above equations. We find that after observing all possible solutions we always have $n\leq 3$. Further, if $n=0$ then there is no $\mathbf{v}$; if $n=1$ then we can find a solution for all $\mathbf{v}$ such that $v_1|6$; if $n=2$, we can find a solution for all $\mathbf{v}_1,\mathbf{v}_2$ such that $v_{1,0},v_{2,0}|6$ except for $(v_{1,0},v_{2,0})=(2,2)$; if $n=3$, then we can find a solution for all $\mathbf{v}_1,\mathbf{v}_2,\mathbf{v}_3$ such that $v_{1,0},v_{2,0}|6, v_{3,0}=1$ except for $(v_{1,0},v_{2,0},v_{3,0})=(1,1,1)$. We do not write down all the possible solutions as there are too many cases and it is not enlightening to do so. However, in the table below we give an example for each possible case. For ease of notation, we will denote $P_i,Q_j$ as primes such that $\deg(P_i)=i$ and $\deg(Q_j)=j$.

\begin{center}
\begin{tabular}{c|c|c|c}
$F_0$ & $G_0$ & $n$ & $\mathbf{v}$ \\
\hline
$1$ & $1$ & $0$ & $\emptyset$ \\
$1$ & $1$ & $1$ & $v_0=1$ \\
$P_2$ & $Q_3$ & $1$ & $v_0=2$ \\
$P_2$ & $Q_2^2$ & $1$ & $v_0=3$ \\
$P_2$ & $Q_2^2Q_3$ & $1$ & $v_0=6$ \\
$1$ & $1$ & $2$ & $v_{1,0}=1,v_{2,0}=1$ \\
$P_2$ & $Q_3$ & $2$ & $v_{1,0}=1,v_{2,0}=2$ \\
$P_2$ & $Q_2^2$ & $2$ & $v_{1,0}=1,v_{2,0}=3$ \\
$P_2$ & $Q_2^2Q_3$ & $2$ & $v_{1,0}=1,v_{2,0}=6$ \\
$P_2$ & $Q_1^2Q_3$ & $2$ & $v_{1,0}=2,v_{2,0}=3$ \\
$P_2$ & $Q_1^2Q_{3,1}Q_{3,2}$ & $2$ & $v_{1,0}=2,v_{2,0}=6$ \\
$P_2$ & $Q_1^2Q_2^2$ & $2$ & $v_{1,0}=3,v_{2,0}=3$ \\
$P_2$ & $Q_1^5Q_2^2$ & $2$ & $v_{1,0}=3,v_{2,0}=6$ \\
$P_2$ & $Q_1^5Q_2^2Q_3$ & $2$ & $v_{1,0}=6,v_{2,0}=6$ \\
\end{tabular}
\end{center}

For examples with $n=3$, $F=F_0F_1F_2F_3$, $G=G_0G_1G_2G_3$, let $(F_3,G_3)\in V_2(\mathbf{v}_3)$ with $v_{3,0}=1$ and $(F_0F_1F_2,G_0G_1G_2)$ one of the examples above with $n=2$.

Even though all our examples have $F_0$ either $1$ or a prime of degree $2$ this is not always the case. For example, in the case that $n=1$ and $v_0=6$ we can choose
$$F_0 = P_2P_6 \quad \quad \quad G_0 = Q_1^5Q_2^2Q_3.$$

\bibliography{phisig}
\bibliographystyle{amsplain}

\end{document}